\let\@fnsymbol\@arabic
\newcommand\blfootnote[1]{%
  \begingroup
  \renewcommand\thefootnote{}\footnote{#1}%
  \addtocounter{footnote}{-1}%
  \endgroup
}
\theoremstyle{definition}
\newtheorem{theorem}{Theorem}[section]
\numberwithin{equation}{section}
\newtheorem{definition}[theorem]{Definition}
\newtheorem{corollary}[theorem]{Corollary} 
\newtheorem{proposition}[theorem]{Proposition}
\newtheorem{remark}[theorem]{Remark}
\theoremstyle{definition}
\newtheorem{example}[theorem]{Example}
\patchcmd{\@maketitle}{\begin{center}}{\begin{flushleft}}{}{}
\patchcmd{\@maketitle}{\begin{tabular}[t]{c}}{\begin{tabular}[t]{@{}l}}{}{}
\patchcmd{\@maketitle}{\end{center}}{\end{flushleft}}{}{}
\DeclareMathOperator{\sto}{\xrightarrow[]{st-o}}
\DeclareMathOperator{\stuo}{\xrightarrow[]{st-uo}}
\DeclareMathOperator{\pc}{\xrightarrow[]{p}}
\DeclareMathOperator{\std}{\downarrow^{st}}
\DeclareMathOperator{\oc}{\xrightarrow[]{o}}
\DeclareMathOperator{\uoc}{\xrightarrow[]{uo}}
\DeclareMathOperator{\stpd}{\downarrow^{st_p}}
\DeclareMathOperator{\stpc}{\xrightarrow[]{st_p}}
\DeclareMathOperator{\stupc}{\xrightarrow[]{st-up}}
\begin{document}
\title{Statistically unbounded $p$-convergence in lattice-normed Riesz spaces}
\author{Abdullah Ayd\i n\\ \small Department of Mathematics, Mu\c{s} Alparslan University, Mu\c{s}, 49250, Turkey\\ \small
a.aydin@alparslan.edu.tr}
\date{{\small }}
\maketitle
\blfootnote{\emph{Keywords and phrases:} statistically unbounded p-convergence, order convergence, statistical $p$-decreasing, statistical order convergence, Riesz space}
\noindent
\blfootnote{\emph{2010 Mathematics Subject Classification:} 46A40, 46E30, 40A05, 46B42}

\noindent\textbf{Abstract:}
The statistically unbounded $p$-convergence is an abstraction of the statistical order, unbounded order, and $p$-convergences. We investigate the concept of the statistically unbounded convergence on lattice-normed Riesz spaces with respect to statistical p-decreasing sequences. Also, we get some relations between this concept and the other kinds of statistical convergences on Riesz spaces.
\noindent

\section{Introduction}
\label{Sec:1}
A Riesz space is an ordered vector space, which was introduced by Riesz in \cite{Riez}. Most of the spaces encountered in the analysis are Riesz spaces. Moreover, Riesz spaces have many applications in measure theory, operator theory, optimization, problems of Banach spaces, measure theory, and applications in economics (cf. \cite{AB,ABPO,AlTo,LZ,Za}).

The {\em order convergence} is one of the fundamental concepts in the study of Riesz spaces, which is not topological in general (cf. \cite{Gor}). However, even without a topological structure, several kinds of continuous	operators can be defined. Another fundamental concept in the theory of Riesz spaces is {\em unbounded order convergence}, which was firstly introduced in \cite{N} under the name individual convergence. A lot of work has been done since then (cf. \cite{AEEM1,DOT,GTX,T}). The unbounded order convergence will be the basic tool of this paper. Lattice-valued norms on vector lattices provide natural and efficient tools in the theory of vector lattices. It is enough to mention the theory of lattice-normed vector lattices (cf. \cite{E,K,KK}).

The theory of the {\em statistical convergence} is an active area of research, which is a generalization of the ordinary convergence of a real sequence, and the idea of the statistical convergence was firstly introduced by Zygmund \cite{Zygmund}. After then, Fast \cite{Fast} and Steinhaus \cite{St} independently improved that idea. Several applications and generalizations of the statistical convergence of sequences have been investigated by several authors (cf. \cite{Aydn1,Aydn2,Aydn3,Ercan,Fridy,SP}. Most of the works on the theory of the statistical convergence are handled with respect to a given topology. But, Ercan introduced a concept of the statistical convergence in Riesz spaces without topology in \cite{Ercan}.Then Şençimen and Pehlivan extended this concept to Riesz spaces with respect to the order convergence; see \cite{SP}. Recently, Ayd\i n et al., have investigated some studies about the statistical convergence on Riesz spaces, Riesz algebras, and locally solid Riesz spaces; see \cite{Aydn1,Aydn2,Aydn3}. In the present paper, we aim to introduce the concept of the statistically unbounded $p$-convergence on lattice-normed spaces and illustrate the usefulness of lattice-valued norms for the investigation of different types of statistical convergence in Riesz spaces.

The structure of the paper is as follows. In Section 2, we give several notions related to lattice-normed spaces and statistical convergence. In Section 3, we introduce the concept of the statistically unbounded convergence in lattice-normed spaces. In the last section, we show some main results.
\section{Preliminaries}\label{sec2}

We begin the section with some basic concepts related to the theory of Riesz space and refer to \cite{AB,ABPO,K,KK,LZ,Za} for more details.

\begin{definition}
	A real-valued vector space $E$ with a partial order relation "$\leq$" on $E$ (i.e. it is an antisymmetric, reflexive and transitive relation) is called an {\em ordered vector space} whenever, for every $x,y\in E$, we have
	\begin{enumerate}
		\item[(a)] $x\leq y$ implies $x+z\leq y+z$ for all $z\in E$,
		\item[(b)] $x\leq y$ implies $\lambda x\leq \lambda y$ for every $0\leq \lambda \in \mathbb{R}$.
	\end{enumerate}	
\end{definition}
An ordered vector space $E$ is called a {\em Riesz space} or a {\em vector lattice} if, for any two vectors $x,y\in E$, the infimum and the supremum
$$
x\wedge y=\inf\{x,y\} \ \ \text{and} \ \ x\vee y=\sup\{x,y\}
$$
exist in $E$, respectively. A Riesz space is called {\em Dedekind complete} if every nonempty bounded above subset has a supremum (or, equivalently, whenever every nonempty bounded below subset has an infimum). For an element $x$ in a Riesz spaces $E$, {\em the positive part}, {\em the negative part}, and {\em module} of $x$, respectively 
$$
x^+:=x\vee0, \ \ \ x^-:=(-x)\vee0\ \ and \ \ |x|:=x\vee (-x).
$$ 
In the present paper, the vertical bar $|\cdot|$ of elements of the Riesz spaces will stand for the module of the given elements. A Riesz space $E$ is called {\em Archimedean} whenever $\frac{1}{n}x\downarrow0$ holds in $E$ for each $x\in E_+$. Unless otherwise stated, we assume that all vector lattices are real and Archimedean. 

A sequence $(x_n)$ in a Riesz space $E$ is said to be {\em increasing} whenever $x_1 \leq x_2\leq\cdots$, and {\em decreasing} if $x_1 \geq x_2\geq \cdots$. Then we denote them by $x_n\uparrow$ and $x_n\downarrow$ respectively. Moreover, if $x_n\uparrow$ and $\sup x_n=x$ then we write $x_n\uparrow x$. Similarly, if $x_n\downarrow$ and $\inf x_n=x$ then we write $x_n\downarrow x$. Then we call that $(x_n)$ is increasing or decreasing as {\em monotonic}. Moreover, we remind that a sequence $(x_n)$ in a Riesz space $X$ is called
\begin{enumerate}
	\item[-] {\em order convergent} to $x\in X$ $($$x_n\oc x$, for short$)$ whenever there exists a sequence $(q_n)\downarrow0$ in $X$ such that $|x_n-x|\le q_n$ for all $n\in \mathbb{N}$,
	\item[-] {\em unbounded order convergent} (or $uo$-convergent, for short) to $x\in X$ if $|x_n-x|\wedge u\oc0$ for every $u\in X_+$, in this case, we write $x_n\uoc x$.
\end{enumerate}
It is clear that order convergence implies $uo$-convergence because $|a-b|\wedge u\leq|a-b|$ for any $u\geq 0$. But, the converse need not be true. For example, consider the sequence $(e_n)$ of the standard unit vectors in $c_0$. Then $e_n\uoc 0$, but does not converge in order because $(e_n)$ is not order bounded in $c_0$. In $L_p$-spaces, where $1\leq p<\infty$, the $uo$-convergence of sequences is equivalent to almost everywhere convergence (cf. \cite{GTX}). Also, if $E:=\ell_p$, $1\leq p<\infty$, $c_0$ or $c$ then $uo$-convergence is equivalent to the coordinate-wise convergence (cf. \cite{DOT}).

\begin{definition}
	Let $X$ be a vector space and $E$ be a Riesz space. Then $p:X \to E_+$ is called an {\em $E$-valued vector norm} whenever it satisfies the following conditions:
	\begin{enumerate}
		\item $p(x)=0\Leftrightarrow x=0$;
		\item $p(\lambda x)=|\lambda|p(x)$ for all $\lambda\in\mathbb{R}$;
		\item $p(x+y)\leq p(x)+p(y)$ for all $x,y\in X$.
	\end{enumerate}
\end{definition}
Then the triple $(X,p,E)$ is called a {\em lattice-normed space}, abbreviated as $LNS$. 
If, in addition, $X$ is a Riesz space and the vector norm $p$ is monotone (i.e., $|x|\leq |y|\Rightarrow p(x)\leq p(y)$ holds for all $x,y\in X$) then the triple $(X,p,E)$ is called a {\em lattice-normed vector lattice} or a {\em lattice-normed Riesz space}. We abbreviate it as $LNRS$.
While dealing with $LNRS$s, we shall keep in mind also the following examples.
\begin{example}\label{ExLNVL_2}
	Let $X$ be a normed space with a norm $\|\cdot\|$. Then $(X,\|\cdot\|,{\mathbb R})$ is an $LNS$ . 
\end{example}

\begin{example}\label{ExLNVL_3}
	Let $X$ be a Riesz space. Then $(X,|\cdot|,X)$ is an $LNRS$ . 
\end{example}

We abbreviate the convergence $p(x_n-x)\oc0$ as $x_n\pc x$ and say in this case that $(x_n)$ {\em $p$-converges} to $x$ in an $LNS$ $(X,p,E)$. Moreover, an $LNRS$ $(X,p,E)$ is called {\em $op$-continuous} if $x_n\oc0$ implies $p(x_n)\oc0$. In an $LNS$ $(X,p,E)$ a subset $A$ of $X$ is called {\em $p$-bounded} if there exists $e\in E$ such that $p(a)\leq e$ for all $a\in A$. A vector $e\in X$ is called a {\em $p$-unit} if, for any $x\in X_+$, $p(x-ne\wedge x)\xrightarrow{o}0$. We refer the reader for more information on $LNS$s to \cite{AEEM1,E,KK,K}.

Now, we recall some basic properties of the concepts related to the statistical convergence. Consider a set $K$ of positive integers. Then the {\em natural density} of $K$ is defined by
$$
\delta(K):=\lim_{n\rightarrow \infty}\frac{1}{n}\left \vert \left \{  k\leq n:k\in
K\right \}  \right \vert,
$$
where the vertical bar of sets will stand for the cardinality of the given sets. We refer the reader to an exposition on the natural density of sets to \cite{Fast,Fridy}. In the same way, a sequence $x=(x_{k})$ is called \textit{statistical convergent} to $L$ provided that
$$
\lim_{n\rightarrow \infty}\frac{1}{n}\left \vert \left \{  k\leq n:\left \vert
x_{k}-L\right \vert \geq \varepsilon \right \}  \right \vert =0
$$
for each $\varepsilon>0$. Then it is written by $S-\lim x_{k}=L$.
We take the following notions from \cite{Aydn3,SP}. Let $(x_n)$ be a sequence in a Riesz space $E$. Then $(x_n)$ is called
\begin{enumerate}
	\item[-] {\em statistically order decreasing} to $0$ if there exists a set $K=\{n_1<n_2<\cdots\}\subseteq\mathbb{N}$ with $\delta(K)=1$ such that $(x_{k_n})$ is decreasing and $\inf\limits_{k_n\in K}(x_{k_n})=0$, and so, it is abbreviated as $x_n\std 0$,
	
	\item[-] {\em statistically order convergent} to $x\in E$ if there exists a sequence $q_n\std0$ with a set $K=\{n_1<n_2<\cdots\}\subseteq\mathbb{N}$ such that $\delta(K)=1$ and $|x_{k_n}-x|\leq q_{k_n}$ for every $k_n\in K$, and so, we write $x_n\sto x$,
	
	\item[-] {\em statistically unbounded order convergent} to $x\in E$ if, for every $u\in E_+$, there exists a sequence $q_n\downarrow^{st}0$ and a subset $K$ of the natural numbers such that $\delta(K)=1$ and 
	$$
	|x_{k_n}-x|\wedge u\le q_{k_n}
	$$
	for all $k_n\in K$, and so, we abbreviate it as $x_n\stuo x$.
\end{enumerate}
Moreover, we take the following notions from \cite{AYE}. A sequence in an $LNS$ $(X,p,E)$ is said to be
\begin{enumerate}
	\item[-] {\em statistically $p$-decreasing} to $0$ if there exists a set $K=\{n_1<n_2<\cdots\}\subseteq\mathbb{N}$ such that $\delta(K)=1$ and $p(x_{k_n})\downarrow 0$ on $K$, and so, we abbreviate it as $x_n\stpd0$,
	\item[-] {\em statistically $p$-convergence} to $x$ if there exists a sequence $q_n\stpd0$ in $X$ with a set $K=\{n_1<n_2<\cdots\}\subseteq\mathbb{N}$ such that $\delta(K)=1$ and $p(x_{k_n}-x)\leq q_{k_n}$ for all $k_n\in K$, and so, we write $x_n\stpc x$. 
\end{enumerate}
\section{Statistically unbounded $p$-convergence}\label{sec3}
We introduce the statistically unbounded convergence on $LNRS$s in this section. Recall that a sequence $(x_n)$ in an $LNRS$ $(X,p,E)$ is said to be {\em unbounded $p$-convergent} to $x\in X$ (shortly,  
$x_n\xrightarrow{up}x$), if $p(|x_n-x|\wedge u)\oc0$ for all $u\in X_+;$ see \cite[Def.6]{AEEM1}. The following definition is motivated by the notion of $up$-convergence.
\begin{definition}
	Let $(X,p,E)$ be an $LNRS$ and $(x_n)$ be a sequence in $X$. Then $(x_n)$ is said to be {\em statistically unbounded $p$-convergence} to $x$ if, for any positive element $u\in X_+$, there exists a sequence $q_n\stpd0$ in $X$ with an index set $K=\{n_1<n_2<\cdots\}\subseteq\mathbb{N}$ such that $\delta(K)=1$ and 
	$$
	p(|x_{k_n}-x|\wedge u)\leq q_{k_n}
	$$
	for all $k_n\in K$. Then we write $x_n\stupc x$. 
\end{definition}

It is clear that a sequence $x_n\stupc x$ holds in an $LNRS$ $(X,p,E)$ if and only if $(x_n-x)\stupc 0$ if and only if $p(|x_n-x|\wedge u)\sto 0$ in $E$ for all $u\in X_+$.
\begin{example}
	Let $X^*$ be the algebraic dual of a Riesz space $X$ and $Y$ be a sublattice of $X^*$ such that $\left\langle X,Y\right\rangle$ is a dual system. Then define an $LNRS$ $(X,p,{\mathbb R}^Y)$ with a lattice norm $p:X\to {\mathbb R}^Y$ denoted by $p(x)[f]:=|f|(|x|)$. Take a sequence $(x_n)$ in $X$. Thus, it is clear that $|x_n|\wedge u\xrightarrow{|\sigma|(X,Y)}0$ for every $u\in X_+$, and so, we have $x_n\stupc 0$.
\end{example}

The next observations follow directly from the basic definitions and results, and so, we omit their proofs.
\begin{remark}\label{main remark}\ 
	\begin{enumerate}
		\item[(i)] It follows from the inequality $|a-b|\wedge u\leq |a-b|$ in Riesz spaces and by the monotonicity of $p$ in $LNRS$s that statistical $p$-convergence implies $st$-$up$-convergence.
		
		\item[(ii)] Statistically unbounded $p$-convergence coincides with the notion of statistical $p$-convergence for order bounded sequences.
		
		\item[(iii)] Consider an $LNRS$ $(L_p(\mu),\lVert\cdot\rVert,\mathbb{R})$ for $1\leq p<\infty$, where $\mu$ is a finite measure. Then the convergence in measure implies $st$-$up$-convergence of sequences in $L_p(\mu)$; see \cite{T}.
		
		\item[(iv)] Take the $LNRS$s $(L_p,\lvert\cdot\rvert,L_p)$ for $1\leq p<\infty$. Then the abstraction of a.e.-convergence in $L_p$-spaces implies $st$-$up$-convergence of sequences in $L_p$; see \cite{GTX}.
		
		\item[(v)] Consider an $LNRS$ $(E,|\cdot|,E)$ for an arbitrary Riesz space $E$. Then statistically unbounded order convergence and $st$-$up$-convergence agree.
		
		\item[(vi)] Every $up$-convergent sequence is statistically unbounded $p$-convergent to its $up$-limit.
		
		\item[(vii)] A statistical $p$-decreasing sequence is $st$-$up$-convergent.
		
		\item[(viii)] Take an $LNRS$ $(X,p,E)$, where $E$ is $\ell_p$ ($1\leq p<\infty$), $c_0$ or $c$. Then the coordinate-wise convergence of a sequence implies $st$-$up$-convergence.
		
		\item[(ix)] An order convergent sequence in $op$-continuous $LNRS$s is $st$-$up$-convergent to its order limit.
	\end{enumerate}
\end{remark}

The converse of the properties of Remark \ref{main remark} need not be true in general. To see some of them, we give the following examples.
\begin{example}
	Consider the $LNRS$ $(c_0,|\cdot|,c_0)$, where $c_0$ is the set of all convergent to zero real sequences. Take the sequence $(e_n)$ of the standard unit vectors in $c_0$. Then $(e_n)$ is statistically unbounded $p$-convergent to $0$. However, it is not $p$-convergent because it is not $p$-bounded in $c_0$.
\end{example}

\begin{example}\label{exam 2}
	Let's consider the $LNRS$ $(\mathbb{R}^2,|\cdot|,\mathbb{R}^2)$, where Euclidean space $\mathbb{R}^2$ with the coordinate-wise ordering. Take a sequence $(x_n)$ in $\mathbb{R}^2$ denoted by
	$$
	x_n:=
	\begin{cases} 
		(0,n), & k=n^3 \\
		(1,\frac{1}{n}), & k\neq n^3
	\end{cases}
	$$
	for all $n$, where $k\in\mathbb{N}$. Thus, $(x_n)$ is statistically unbounded $p$-convergent to $(1,0)\in \mathbb{R}^2$. But, it does not $up$-convergent.
\end{example}

\begin{example}\label{monotone not conversely}
	Let's take the $LNRS$ $c$ of all convergent real sequences. Consider a sequence $(x_n)$ in $LNRS$ $(c,|\cdot|,c)$ denoted by $x_n:=(x_k^n)=(x_1^n,x_2^n,\cdots,x_k^n,\cdots)\in E$ such that
	$$
	x_k^n:=
	\begin{cases} 
		1, & k=n^2 \\
		\frac{1}{k+1}, & k\neq n^2
	\end{cases}
	$$
	for all $n,k\in\mathbb{N}$. Then it is clear that $x_n\stupc 0$. Observe that the whole sequence $(x_n)$ is not monotonic.
\end{example}

It is clear from Example \ref{exam 2} that a statistical $p$-monotone decreasing sequence need not be monotone convergent in general. It is well known that a subsequence of order convergent sequence is order convergent to its order limit. However, this need not be true for $st$-$up$-convergence. To illustrate this, we consider the following example.
\begin{example}\label{subseq example}
	Take the Riesz space $\mathbb{R}$ and a sequence $(x_n)$ in $\mathbb{R}$ defined by
	$$
	x_n:=
	\begin{cases} 
		(-1)^n(2n+1), & k=n^5 \\
		\frac{1}{2n+1}, & k\neq n^5
	\end{cases}
	$$
	for all $n$, where $k\in\mathbb{N}$. Fix $u\in \mathbb{R}_+$. Thus, for the sequence $q_n:=\frac{1}{n+1}\downarrow 0$, we have $|x_n|\wedge u\leq q_n$ for all $n\in\mathbb{N}$ such that $n\neq k^5$ for all $k\in\mathbb{N}$. Hence, we obtain $x_n\stupc 0$. But, by choosing a subsequence $(x_{m_n})$ of $(x_n)$ such that $m_n=k^5$ for some $k\in \mathbb{N}$, it is clear that $(x_{m_n})$ is not $st$-$up$-convergent.
\end{example}

By the following sense, we see that the lattice operations in an $LNRS$ are statistically unbounded $p$-continuous.
\begin{theorem}\label{LO are continuous}
	If $x_n\stupc x$ and $y_n\stupc y$ satisfy then $x_n\vee y_n\stupc x\vee y$ in $LNRS$s.
\end{theorem}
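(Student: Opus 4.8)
The plan is to reduce everything to the statistical order convergence in $E$ by means of the equivalent formulation recorded right after the definition of $st$-$up$-convergence: a sequence $z_n\stupc z$ in $(X,p,E)$ exactly when $p(|z_n-z|\wedge u)\sto 0$ in $E$ for every $u\in X_+$. Accordingly I fix an arbitrary $u\in X_+$ and aim to prove that $p\big(|(x_n\vee y_n)-(x\vee y)|\wedge u\big)\sto 0$ in $E$.

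The first step is purely lattice-theoretic. Using the standard Riesz-space inequality $|(x_n\vee y_n)-(x\vee y)|\leq |x_n-x|+|y_n-y|$ together with the inequality $(a+b)\wedge u\leq a\wedge u+b\wedge u$, valid for all positive $a,b,u$, I obtain
$$|(x_n\vee y_n)-(x\vee y)|\wedge u\leq |x_n-x|\wedge u+|y_n-y|\wedge u.$$
Both sides being positive, the monotonicity of the vector norm $p$ and then its subadditivity give the domination
$$p\big(|(x_n\vee y_n)-(x\vee y)|\wedge u\big)\leq p(|x_n-x|\wedge u)+p(|y_n-y|\wedge u)$$
for every $n\in\mathbb{N}$.

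It remains to show that the right-hand side is statistically order null in $E$. By hypothesis $p(|x_n-x|\wedge u)\sto 0$ and $p(|y_n-y|\wedge u)\sto 0$, so there are dominating sequences $q_n\std 0$ and $r_n\std 0$ in $E$ together with density-one index sets witnessing these two statistical order convergences. Since any finite intersection of sets of natural density one again has density one, I can select a single index set $K$ with $\delta(K)=1$ on which, first, $(q_{k_n})$ and $(r_{k_n})$ are both decreasing with infimum $0$, and, second, the two pointwise dominations $p(|x_{k_n}-x|\wedge u)\leq q_{k_n}$ and $p(|y_{k_n}-y|\wedge u)\leq r_{k_n}$ hold simultaneously. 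On $K$ the sequence $(q_{k_n}+r_{k_n})$ is then decreasing, and as the sum of two sequences decreasing to $0$ it has infimum $0$; hence $q_n+r_n\std 0$. Combining this with the displayed domination yields $p\big(|(x_n\vee y_n)-(x\vee y)|\wedge u\big)\leq q_{k_n}+r_{k_n}$ for all $k_n\in K$, which is exactly $p\big(|(x_n\vee y_n)-(x\vee y)|\wedge u\big)\sto 0$. Since $u\in X_+$ was arbitrary, this gives $x_n\vee y_n\stupc x\vee y$.

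The lattice estimates are routine, so I expect the only real care to lie in the statistical bookkeeping: one must merge the several density-one sets (the two carrying the $\std$-monotonicity of $q$ and $r$, and the two carrying the dominations) into one density-one set, and then verify that $(q_n+r_n)$ still satisfies $\std 0$ in the precise sense of the definition, namely that it is decreasing with infimum zero along a density-one set, and not merely statistically convergent. This is the step where the definitions must be applied most carefully.
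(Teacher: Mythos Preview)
Your proof is correct and follows essentially the same route as the paper's: both establish the pointwise domination
\[
p\big(|(x_n\vee y_n)-(x\vee y)|\wedge u\big)\leq p(|x_n-x|\wedge u)+p(|y_n-y|\wedge u),
\]
then intersect the relevant density-one index sets and observe that the sum of the two dominating sequences still decreases to $0$ along that common set. The only cosmetic difference is that the paper reaches the lattice estimate by telescoping through $x_n\vee y$ and applying $|a\vee b-a\vee c|\le|b-c|$ twice, whereas you invoke the Birkhoff inequality $|x_n\vee y_n-x\vee y|\le|x_n-x|+|y_n-y|$ together with $(a+b)\wedge u\le a\wedge u+b\wedge u$ directly; your statistical bookkeeping is in fact spelled out more carefully than the paper's.
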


\begin{proof}
	Let $x_n\stupc x$ and $y_n\stupc y$ be in an $LNRS$ $(X,p,E)$. Take an arbitrary $u\in E_+$. Then there exist sequences $q_n\stpd0$ and $t_n\stpd0$ with $\delta(K)=\delta(M)=1$ such that $p|(x_{k_n}-x|\wedge u)\leq q_{k_n}$ and $p(|x_{m_n}-y|\wedge u)\leq t_{m_n}$ for each $k_n\in K$ and $m_n\in M$. By using the inequality $|a\vee b-a\vee c|\leq|b-c|$ (cf. \cite[Thm.1.9(2)]{ABPO}) in Riesz spaces, we have
	\begin{eqnarray*}
		p(|x_n\vee y_n-x\vee y|\wedge u)&\leq& p(|x_n\vee y_n-x_n\vee y|\wedge u)+p(|x_n \vee y-x\vee y|\wedge u)\\&\leq& p(|y_n-y|\wedge u)+p(|x_n-x|\wedge u)
	\end{eqnarray*}
	for every $n\in\mathbb{N}$. So, it follows that
	$$
	p(|x_{j_n}\vee y_{j_n}-x\vee y|\wedge u)\leq p(|y_{j_n}-y|\wedge u)+p(|x_{j_n}-x|\wedge u)\leq q_{j_n}+t_{j_n}
	$$
	for each $j_n\in J$, where $J:=K\cap J$. Therefore, we have $x_n\vee y_n\stupc x\vee y$ by $(q_{j_n}+t_{j_n})\downarrow0$.
\end{proof}

\begin{theorem}\label{basic properties}
	Let $(X,p,E)$ be an $LNRS$, $x_n\stupc x$ and $y_n\stupc y$. Then the following statements hold;
	\begin{enumerate}
		\item[(i)] if $x_n\stupc x$ and $x_n\stupc z$ then $x=y$;
		\item[(ii)] $\lambda x_n+\beta y_n\stupc \lambda x+\beta y$ for all $\lambda,\beta\in\mathbb{R}$;
		\item[(iii)] $|x_n|\stupc |x|$;
		\item[(iv)] $x_n^+\stupc x^+$;
		\item[(v)] if $x_n\geq y_n$ for all $n\in\mathbb{N}$ then we have $x\geq y$.
	\end{enumerate}
\end{theorem}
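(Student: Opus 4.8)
The plan is to reduce all five assertions to the scalar-valued characterization recorded just after the definition, namely that $x_n\stupc x$ holds if and only if $p(|x_n-x|\wedge u)\sto 0$ in $E$ for every $u\in X_+$. Phrasing everything through $\sto$ in $E$ lets me lean on two elementary closure properties of statistical order convergence that I would establish first: (a) a \emph{domination} principle, namely if $0\le a_n\le b_n$ in $E$ and $b_n\sto 0$ then $a_n\sto 0$; and (b) \emph{linearity}, that $\sto$ is stable under sums and scalar multiples. Both reduce to the analogous facts for $\std$ together with the bookkeeping fact that the intersection of two density-one index sets again has density one. The single Riesz-space inequality I need repeatedly is $(a+b)\wedge u\le a\wedge u+b\wedge u$ for $a,b,u\in X_+$; after applying the monotone, subadditive norm $p$, it converts a triangle inequality in $X$ into a sum of two $\sto$-null sequences in $E$ dominating the quantity of interest.

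For (i) (whose conclusion should read $x=z$) I would fix $u\in X_+$ and use $|x-z|\wedge u\le|x_n-x|\wedge u+|x_n-z|\wedge u$; applying $p$ gives $p(|x-z|\wedge u)\le p(|x_n-x|\wedge u)+p(|x_n-z|\wedge u)$, and the right-hand side is $\sto 0$. Since the left-hand side is constant in $n$ and nonnegative, it must vanish, so $p(|x-z|\wedge u)=0$, hence $|x-z|\wedge u=0$; taking $u=|x-z|$ forces $x=z$. For (ii) I would split into scalars and sums. For $\lambda\ne 0$ the identities $|\lambda|\,|x_n-x|\wedge u=|\lambda|\bigl(|x_n-x|\wedge(u/|\lambda|)\bigr)$ and $p(\lambda w)=|\lambda|p(w)$ reduce $\lambda x_n\stupc\lambda x$ to closure of $\sto$ under positive scalars; the addition $x_n+y_n\stupc x+y$ then follows from $|(x_n+y_n)-(x+y)|\wedge u\le|x_n-x|\wedge u+|y_n-y|\wedge u$ and domination.

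For (iii) and (iv) I would invoke the lattice contractions $\bigl||x_n|-|x|\bigr|\le|x_n-x|$ and $|x_n^+-x^+|\le|x_n-x|$; meeting with $u$, applying monotone $p$, and using domination immediately yields $p(\bigl||x_n|-|x|\bigr|\wedge u)\sto 0$ and $p(|x_n^+-x^+|\wedge u)\sto 0$, which are exactly $|x_n|\stupc|x|$ and $x_n^+\stupc x^+$. For (v) I would set $z_n:=x_n-y_n\ge 0$; by (ii) $-z_n\stupc-(x-y)$, and then (iv) gives $(-z_n)^+\stupc(x-y)^-$. Since $(-z_n)^+=z_n^-=0$ for every $n$, the constant-zero sequence $\stupc$-converges to $(x-y)^-$, so uniqueness from (i) forces $(x-y)^-=0$, i.e. $x\ge y$.

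Most of the work is bookkeeping once the characterization and the closure properties (a), (b) are in hand. The step I expect to require genuine care is the \emph{constant-dominated-by-null} argument used in (i) and (v): I must verify that a fixed vector $c\in E_+$ satisfying $c\le w_n$ for all $n$, with $w_n\sto 0$, is forced to be $0$. This rests on unwinding $w_n\sto 0$ to a decreasing-to-$0$ dominating sequence $r$ on a density-one set and observing that $r$ retains infimum $0$ along the still-density-one index set on which $c\le r$, whence $c\le\inf r=0$.
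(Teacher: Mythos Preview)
Your proposal is correct and follows essentially the same route as the paper. The paper proves (i) by the identical triangle-type estimate $p(|x-y|\wedge u)\le q_{j_n}+r_{j_n}$ on a density-one intersection and then lets the dominating sequence decrease to $0$; it defers (iii) and (iv) to the preceding lattice-operations theorem (whose proof is exactly your contraction inequality $|a\vee b-a\vee c|\le|b-c|$), declares (ii) ``straightforward'', and says (v) ``can be obtained by applying (iv)''---precisely the reduction you spell out via $(x_n-y_n)^-=0$ and uniqueness. Your only cosmetic difference is that you organize everything through the $\sto$-characterization rather than unwinding to the $\stpd$ witnesses each time, which makes the bookkeeping cleaner but changes nothing substantive.
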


\begin{proof}
	The properties $(iii)$ and $(iv)$ are results of Theorem \ref{LO are continuous}, and so, we omit their proof. Moreover, $(v)$ can be obtained by applying $(iv)$, and $(ii)$ has a straightforward proof.
	
	To prove $(i)$, consider that a sequence $(x_n)$ in an $LNRS$ $(X,p,E)$ satisfies $x_n\stupc x$ and $x_n\stupc y$. Fix a positive element $u\in X_+$. Then there exist sequences $q_n\stpd0$ and $t_n\stpd0$ with index sets $K,M\subseteq\mathbb{N}$ such that $\delta(K)=\delta(M)=1$ and 
	$$
	p(|x_{k_n}-x|\wedge u)\leq q_{k_n} \ \ \text{and} \ \ p(|x_{m_n}-y|\wedge u)\leq t_{m_n}
	$$
	hold for all $k_n\in K$ and $m_n\in M$. Take $J:=K\cap M$, and so, we have $\delta(J)=1$. It follows $p(|x_{j_n}-x|\wedge u)\leq q_{j_n}$ and $p(|x_{j_n}-y|\wedge u)\leq r_{j_n}$ for each $j_n\in J$. Thus, we observe
	$$
	0\leq p(|x-y|\wedge u)\leq p(|x_{m_n}-x|\wedge u) + p(|x_{m_n}-y|\wedge u) \leq q_{j_n}+r_{j_n}
	$$
	for every $j_n\in J$. Thus, $(q_{j_n}+r_{j_n})\downarrow 0$ on $J$, and so, we obtain $|x-y|\wedge u=0$. Since $u$ is arbitrary, we get $x=y$.
\end{proof}

By applying Theorem \ref{basic properties}$(v)$, we give the following result.
\begin{corollary}
	Let $(X,p,E)$ be an $LNRS$ and $(x_n)$, $(y_n)$ and $(z_n)$ be sequences in $X$ such that $x_n\leq y_n\leq z_n$ for all $n\in\mathbb{N}$. Then $x_n\stpc x$ and $z_n\stpc x$ implies $y_n\stpc x$.
\end{corollary}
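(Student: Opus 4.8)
The plan is to reduce the statement to a single pointwise domination inequality in $E$ and then run the same statistical bookkeeping that appears in the proof of Theorem \ref{basic properties}. Concretely, I first want to show that the order sandwich $x_n\leq y_n\leq z_n$ forces
$$
p(y_n-x)\leq p(x_n-x)+p(z_n-x)
$$
for every $n\in\mathbb{N}$, after which the conclusion $y_n\stpc x$ follows by combining the two given dominating sequences on a common density-$1$ index set.

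First I would establish the order inequality. Subtracting $x$ from $x_n\leq y_n\leq z_n$ gives $x_n-x\leq y_n-x\leq z_n-x$, and a standard Riesz space fact (if $a\leq b\leq c$ then $|b|\leq|a|\vee|c|$, cf. \cite{ABPO}) yields $|y_n-x|\leq|x_n-x|\vee|z_n-x|$. Since $|x_n-x|\vee|z_n-x|\leq|x_n-x|+|z_n-x|$, the monotonicity and the subadditivity of the lattice norm $p$ (together with the identity $p(a)=p(|a|)$, which itself is immediate from monotonicity) deliver the displayed pointwise bound.

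Next I would invoke the hypotheses. From $x_n\stpc x$ and $z_n\stpc x$ I obtain statistically $p$-decreasing sequences $q_n\stpd0$ and $t_n\stpd0$, together with index sets $K,M$ of density $1$, such that $p(x_{k_n}-x)\leq q_{k_n}$ on $K$ and $p(z_{m_n}-x)\leq t_{m_n}$ on $M$. Setting $J:=K\cap M$, which again has density $1$, the pointwise inequality restricted to $J$ reads $p(y_{j_n}-x)\leq q_{j_n}+t_{j_n}$ for all $j_n\in J$. It then remains to observe that $q_n+t_n\stpd0$, exactly as in the proof of Theorem \ref{basic properties}, so that the sequence $(q_n+t_n)$ together with the set $J$ witnesses $y_n\stpc x$.

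The routine parts are the triangle and monotonicity manipulations; the one step that needs care is the verification that $(q_n+t_n)\stpd0$, since this is where the two separately chosen dominators must be merged on a single density-$1$ set. The fact that $J=K\cap M$ still has density $1$ (its complement is a union of two density-$0$ sets) and the fact that a sum of decreasing-to-$0$ sequences is again decreasing to $0$ are precisely what keep the argument honest, and this mirrors the corresponding passage in Theorem \ref{basic properties}$(i)$.
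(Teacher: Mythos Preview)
Your argument is correct. The pointwise bound $|y_n-x|\le|x_n-x|\vee|z_n-x|\le|x_n-x|+|z_n-x|$ is valid (indeed, from $x_n-x\le y_n-x\le z_n-x$ one has $y_n-x\le|z_n-x|$ and $-(y_n-x)\le|x_n-x|$), and the remaining steps---monotonicity of $p$, intersection of the density-$1$ index sets, and the observation that $(q_{j_n}+t_{j_n})\downarrow0$ on $J$---are exactly the bookkeeping already carried out in the proof of Theorem~\ref{basic properties}(i).

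The paper, by contrast, gives no explicit proof: it simply states the corollary after the sentence ``By applying Theorem~\ref{basic properties}$(v)$, we give the following result.'' Taken literally, part~$(v)$ only says that limits preserve order, which would pin down the value of the limit of $(y_n)$ \emph{once that limit is known to exist}; it does not by itself produce the convergence. Your direct inequality closes this gap and is in fact the natural squeeze argument in this setting. So your route is a genuine (and cleaner) alternative: it avoids the circularity implicit in invoking $(v)$ alone, at the modest cost of writing out the triangle/monotonicity estimate explicitly.
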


In the following theorem, we give a relation between $st$-$up$- and order convergence.
\begin{theorem}\label{$up$-sup-inf}
	Any monotone $st$-$up$-convergent sequence in an $LNRS$ is order convergent to its $st$-$up$-limit.
\end{theorem}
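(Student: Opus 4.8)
The plan is to reduce to the case of an increasing sequence and then show that the $st$-$up$-limit is in fact the supremum. If $(x_n)$ is decreasing, then $(-x_n)$ is increasing and, by Theorem \ref{basic properties}$(ii)$ with $\lambda=-1$, $-x_n\stupc -x$; since order convergence is invariant under negation (as $|{-x_n}-({-x})|=|x_n-x|$), it suffices to treat the increasing case. So assume $x_n\uparrow$ and $x_n\stupc x$. I claim $x_n\uparrow x$, i.e. $x=\sup_n x_n$; once this is established, the sequence $q_n:=x-x_n\downarrow 0$ satisfies $|x_n-x|=x-x_n=q_n$, which gives $x_n\oc x$ directly.

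First I would show that $x$ is an upper bound, i.e. $x_m\le x$ for every $m$. Fix $m$ and define the auxiliary sequence $y_n:=x_n$ for $n<m$ and $y_n:=x_m$ for $n\ge m$. Then $y_n\le x_n$ for all $n$ because $(x_n)$ is increasing, and $(y_n)$ is eventually constant, hence order convergent and therefore $st$-$up$-convergent to $x_m$. Applying Theorem \ref{basic properties}$(v)$ to $y_n\le x_n$ yields $x_m\le x$. Consequently $x-x_n\ge 0$ and $(x-x_n)$ is decreasing.

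The remaining and crucial step is to prove $\inf_n(x-x_n)=0$. Since $0$ is already a lower bound, if this failed there would be a lower bound $w$ of $(x-x_n)$ with $w^{+}>0$; writing $v:=w^{+}$ and using $w\le x-x_n$ together with $x-x_n\ge 0$ gives $0<v\le x-x_n=|x_n-x|$ for all $n$, so $|x_n-x|\wedge v=v$ and hence $p(|x_n-x|\wedge v)=p(v)$ for every $n$, with $p(v)>0$ because $p$ is a vector norm. On the other hand, $st$-$up$-convergence applied with $u=v$ supplies a statistically $p$-decreasing sequence $q_n\stpd 0$ and a density-one index set $K$ with $p(|x_{k_n}-x|\wedge v)\le q_{k_n}$ on $K$. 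The main obstacle is to turn these two facts into a contradiction, and this is exactly where the statistical machinery must be used carefully: $q_n\stpd 0$ provides a further density-one set $K'$ along which $q$ decreases to $0$, and since the intersection of finitely many density-one sets again has density one and is therefore infinite, on $K\cap K'$ the fixed positive element $p(v)$ would be dominated by terms of a decreasing sequence whose infimum is $0$; but infinitely many terms of a decreasing sequence bounded below by $p(v)$ force its infimum to be $\ge p(v)>0$, a contradiction. Hence $\inf_n(x-x_n)=0$, that is $x-x_n\downarrow 0$, which completes the argument. I note that the very same contradiction scheme, run with $v:=(x_m-x)^{+}$ over the density-one tail $\{n\ge m\}$, gives an alternative proof of the upper-bound step that avoids invoking Theorem \ref{basic properties}$(v)$.
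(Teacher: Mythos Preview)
Your proof is correct. For the first step (that $x$ is a bound for the monotone sequence) you and the paper do essentially the same thing via Theorem~\ref{basic properties}: the paper works in the decreasing case and observes that $x_m-x_n\ge 0$ for $n\ge m$ forces $x_m-x\ge 0$, while you pass to the increasing case and compare with an eventually constant auxiliary sequence, invoking part~$(v)$. Where the two arguments genuinely differ is in showing that $x$ is the \emph{extremal} bound. The paper simply re-runs the same lemma: for any other lower bound $w$ one has $x_n-w\ge 0$ and $x_n-w\stupc x-w$, so $x-w\ge 0$ by positivity of the $st$-$up$-limit, whence $x=\inf_n x_n$. You instead unfold the definition and argue by contradiction: if some $v>0$ satisfied $v\le x-x_n$ for every $n$, then $p(|x_n-x|\wedge v)=p(v)>0$ identically, while $st$-$up$-convergence with $u=v$ forces this quantity, along a density-one set, below a sequence that decreases to $0$ along another density-one set; intersecting the two sets gives the contradiction. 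The paper's route is shorter and more uniform (one lemma applied twice), whereas your approach is more hands-on and makes the statistical index-set machinery explicit; your closing remark that the same contradiction scheme handles the bound step as well shows that your method can prove the theorem without appealing to Theorem~\ref{basic properties} at all.
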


\begin{proof}
	Suppose that $x_n\stupc x$ and $x_n\downarrow$ hold in an $LNRS$ $(X,p,E)$. Then, for a fixed arbitrary $m$, we have $x_m-x_n\in X_+$ for each $n\ge m$. Thus, by using Theorem \ref{basic properties}, we get $x_m-x_n\stupc x_m-x\in X_+$. So, it is clear that $x$ is a lower bound of $(x_n)$ because $m$ is arbitrary. Choose another lower bound $w$ of $(x_n)$, i.e., $x_n\geq w$ for all $n$. Then, again by applying Theorem \ref{basic properties}, we obtain $x_n-x\stupc x-w\in X_+$. Hence, we have $w\leq x$, i.e., we get $x_n\downarrow x$.
\end{proof}

It follows from Remark \ref{main remark}$(i)$ that statistical $p$-convergence implies $st$-$up$-convergence. For the converse, we give the following result.
\begin{theorem}\label{st up implies st po}
	Let $(x_n)$ be a monotone and $x_n\stupc x$ sequence in an $LNRS$ $(X,p,E)$. Then $x_n\stpc x$.
\end{theorem}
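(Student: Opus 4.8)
The plan is to reduce the statement to the order-boundedness phenomenon recorded in Remark \ref{main remark}(ii): a monotone $st$-$up$-convergent sequence is order convergent, hence order bounded, and for an order bounded sequence the truncation against a sufficiently large $u$ is harmless. First I would invoke Theorem \ref{$up$-sup-inf} to upgrade the hypotheses $x_n\stupc x$ and monotonicity into genuine order convergence; in the decreasing case this yields $x_n\downarrow x$, so that $x\le x_n\le x_1$ for every $n$, and symmetrically $x_1\le x_n\le x$ in the increasing case.

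Next I would fix the single dominating vector $u_0:=|x_1-x|\in X_+$ and verify that $|x_n-x|\le u_0$ for all $n$: indeed, in the decreasing case $0\le x_n-x\le x_1-x$, and in the increasing case $0\le x-x_n\le x-x_1$, so in either situation $|x_n-x|\le|x_1-x|=u_0$. Consequently $|x_n-x|\wedge u_0=|x_n-x|$ for every $n$, which is precisely the identity that erases the unbounded truncation.

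With $u_0$ in hand I would feed it into the definition of $x_n\stupc x$: there exist a sequence $q_n\stpd0$ and an index set $K=\{n_1<n_2<\cdots\}$ with $\delta(K)=1$ such that $p(|x_{k_n}-x|\wedge u_0)\le q_{k_n}$ for all $k_n\in K$. Using the displayed identity together with the elementary fact that $p(|x_{k_n}-x|)=p(x_{k_n}-x)$ (which follows from $p(-y)=p(y)$, or simply from the fixed sign of $x_{k_n}-x$), this reads $p(x_{k_n}-x)\le q_{k_n}$ on $K$. Since $q_n\stpd0$ and $\delta(K)=1$, this is exactly the definition of $x_n\stpc x$, and no further estimate is required.

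I expect the only genuine obstacle to be conceptual rather than computational: the entire gap between $st$-$up$-convergence and $st$-$p$-convergence sits in the truncating factor $u$, and the crux is to locate a single $u$ that simultaneously dominates every tail $|x_n-x|$. Monotonicity is exactly what supplies this, through the order convergence of Theorem \ref{$up$-sup-inf}; once a dominating $u_0$ is secured, all the statistical bookkeeping—the density-one set $K$ and the statistically $p$-decreasing majorant $q_n$—transfers verbatim from the $st$-$up$-limit to the $st$-$p$-limit.
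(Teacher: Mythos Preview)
Your proposal is correct and follows essentially the same route as the paper: invoke Theorem~\ref{$up$-sup-inf} to obtain order convergence, then exploit the resulting order bound to make the truncation $\wedge u$ vacuous for a single well-chosen $u$. The only cosmetic difference is that the paper first reduces without loss of generality to $0\le x_n\uparrow$ and takes $u=x$ itself (since then $0\le x-x_n\le x$), whereas you treat both monotone cases at once and take $u_0=|x_1-x|$; either choice does the job.
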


\begin{proof}
	Assume that $0\le x_n\uparrow$ holds in $X$ by without loss of generality. It follows from Theorem \ref{$up$-sup-inf} that $x_n\oc x$ for some $x\in X$. Thus, we get $0\leq x-x_n\leq x$ for all $n\in \mathbb{N}$. Fix $u\in X_+$. Then we have $p((x-x_n)\wedge u)\sto0$ because of $x_n\stupc x$. If, in particular, we choose $u$ as $x\in X_+$ then we get $p(x-x_n)=p((x_n-x)\wedge x)\sto 0$, i.e., we have $x_n\stpc x$.
\end{proof}	
\section{Main Results}\label{sec4}
Recall that if $(x_n)$ is a sequence satisfying the property $P$ for all $n\in \mathbb{N}$ except a set of natural density zero then we say that $(x_n)$ satisfies the property $P$ for almost all $n$, and it is abbreviated by a.a.n.; see \cite{Fridy}.
\begin{theorem}
	A sequence $x_n\stupc x$ holds in $LNRS$s if and only if there is another sequence $(y_n)$ such that $x_n=y_n$ for a.a.n and $y_n\stupc x$.
\end{theorem}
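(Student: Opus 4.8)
The plan is to treat the two implications separately; the forward direction is immediate and all the content sits in the converse. For necessity ($\Rightarrow$), I would simply take $y_n := x_n$ for every $n$: then $x_n = y_n$ for all $n$, in particular for a.a.n, and $y_n \stupc x$ is precisely the hypothesis, so nothing further is needed.

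For sufficiency ($\Leftarrow$), suppose $(y_n)$ satisfies $x_n = y_n$ for a.a.n and $y_n \stupc x$ in the $LNRS$ $(X,p,E)$. Put $A := \{n \in \mathbb{N} : x_n \neq y_n\}$, so that $\delta(A) = 0$ and hence $\delta(A^c) = 1$. Fix an arbitrary $u \in X_+$. Since $y_n \stupc x$, there exist a sequence $q_n \stpd 0$ and an index set $K = \{n_1 < n_2 < \cdots\}$ with $\delta(K) = 1$ such that $p(|y_{k_n} - x| \wedge u) \leq q_{k_n}$ for every $k_n \in K$. The key step is to pass to the index set $J := K \cap A^c$. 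Since the union of two density-zero sets has density zero, $\delta(J^c) = \delta(K^c \cup A) \leq \delta(K^c) + \delta(A) = 0$, so $\delta(J) = 1$. On $J$ one has $x_{k_n} = y_{k_n}$ (because $J \subseteq A^c$) together with $J \subseteq K$, whence
$$
p(|x_{k_n} - x| \wedge u) = p(|y_{k_n} - x| \wedge u) \leq q_{k_n}
$$
for all $k_n \in J$. Retaining the same sequence $q_n \stpd 0$ and recalling that $u \in X_+$ was arbitrary, this provides exactly the data required by the definition of $\stupc$, and hence $x_n \stupc x$.

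I expect no genuine obstacle. The one point needing care is the density bookkeeping: intersecting the convergence index set $K$ with the agreement set $A^c$ must preserve density one, which follows from the subadditivity of natural density on density-zero sets. The statistically $p$-decreasing majorant $(q_n)$ need not be altered, since shrinking the index set from $K$ to $J \subseteq K$ only discards indices at which the majorizing inequality already held; thus the same $q_n \stpd 0$ serves for $(x_n)$ as well.
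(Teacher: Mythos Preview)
Your proposal is correct and follows essentially the same route as the paper: both prove only the nontrivial direction by intersecting the convergence index set $K$ (coming from $y_n\stupc x$) with the density-one set where $x_n=y_n$, and then observe that the same majorant $q_n\stpd 0$ still works on this smaller set $J$. Your density bookkeeping via $\delta(J^c)\le \delta(K^c)+\delta(A)=0$ is in fact cleaner than the paper's somewhat informal phrasing.
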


\begin{proof}
	Assume that there is a sequence $(y_n)$ in an $LNRS$ $(X,p,E)$ such that $x_n=y_n$ for a.a.n and $y_n\stupc x$. Take an arbitrary $u\in X_+$.  Thus, there exists a sequence $q_n\stpd0$ with a set $\delta(K)=1$ such that $p(|y_{k_n}-x|\wedge u)\leq q_{k_n}$ for all $k_n\in K$. It follows from $x_n=y_n$ for a.a.n that the index sets $\{k_n\in K:p(|x_{k_n}-x|\wedge u)\leq q_{k_n}\}$ and $\{k_n\in K:p(|y_{k_n}-x|\wedge u)\leq q_{k_n}\}$ are a.a.n. Therefore, there is a subset $J$ of $K$ such that $\delta(J)=1$ and $p(|x_{j_n}-x|\wedge u)\leq q_{j_n}$ for each $j_n\in J$. So, we have $x_n\stupc x$.
\end{proof}

In the general case, Example \ref{subseq example} shows that a subsequence of $st$-$up$-convergent sequence need not be $st$-$up$-convergent. But, in the following work, we give a positive result for this.
\begin{theorem}
	Let $(X,p,E)$ be an $LNRS$ and $(x_n)$ be a sequence in $X$. If $x_n\stupc x$ and $0\leq x_n\downarrow$ then every subsequence $(x_{k_n})$ with an index set $\delta(\{k_1,k_2,\cdots,k_i,\cdots\})=1$ is $st$-$up$-convergent to $x$.
\end{theorem}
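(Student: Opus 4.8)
The plan is to transfer the defining data of $st$-$up$-convergence from the whole sequence to the subsequence, the real work being a density computation under the re-indexing $i\mapsto k_i$. First I would record that, by Theorem \ref{$up$-sup-inf}, the hypotheses $0\le x_n\downarrow$ and $x_n\stupc x$ force $x_n\downarrow x$ in order, so $x$ is the genuine order-infimum of $(x_n)$; since a subsequence of a decreasing sequence is again decreasing with the same infimum, $(x_{k_i})$ is itself monotone with $x_{k_i}\downarrow x$. This monotone structure lets me reuse a single control sequence cleanly, but the substance of the proof is combinatorial.

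Fix $u\in X_+$. From $x_n\stupc x$ I obtain a sequence $q_n\stpd0$ and a set $K$ with $\delta(K)=1$ such that $p(|x_n-x|\wedge u)\le q_n$ for all $n\in K$; unfolding $q_n\stpd0$ produces a further set $L$ with $\delta(L)=1$ on which $(p(q_n))_{n\in L}$ is decreasing to $0$. Writing the subsequence index set as $A=\{k_1<k_2<\cdots\}$ with $\delta(A)=1$, I set $J:=\{i:k_i\in K\}$ and $J':=\{i:k_i\in L\}$, put $r_i:=q_{k_i}$, and aim to show that $(r_i)$ is statistically $p$-decreasing and that $p(|x_{k_i}-x|\wedge u)\le r_i$ holds on a set of indices $i$ of density one.

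The main obstacle is precisely this density bookkeeping: I must show that if $A$ and $K$ both have density one then so does $J=\{i:k_i\in K\}$. Here I would use that the first $N$ terms of $A$ are exactly $A\cap[1,k_N]$, which yields the identity $\frac{|J\cap[1,N]|}{N}=\frac{|A\cap K\cap[1,k_N]|}{|A\cap[1,k_N]|}$; letting $N\to\infty$, so that $k_N\to\infty$, the numerator ratio tends to $\delta(A\cap K)=1$ and the denominator ratio to $\delta(A)=1$, whence $\delta(J)=1$, and identically $\delta(J')=1$. This is the step that genuinely uses $\delta(A)=1$ and that rules out the density-zero subsequence of Example \ref{subseq example}.

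Finally I would assemble the pieces. For $i\in J'$ the indices $k_i$ traverse $L$ in increasing order, so $(p(r_i))_{i\in J'}=(p(q_{k_i}))_{i\in J'}$ is a subsequence of the decreasing null sequence $(p(q_n))_{n\in L}$ and hence itself decreases to $0$ on $J'$; thus $r_i\stpd0$. For $i\in J$ we have $p(|x_{k_i}-x|\wedge u)\le q_{k_i}=r_i$. Since $\delta(J)=1$ and $u\in X_+$ was arbitrary, this is exactly the assertion that $(x_{k_i})$ is $st$-$up$-convergent to $x$. I would note that monotonicity enters only through Theorem \ref{$up$-sup-inf} and the resulting clean choice of control sequence, while the density transfer under $i\mapsto k_i$ is what actually carries the argument.
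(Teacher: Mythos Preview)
Your argument is correct and is genuinely different from the paper's. The paper does not carry out a density transfer at all: it first invokes Theorem \ref{st up implies st po} to upgrade $x_n\stupc x$ to $x_n\stpc x$, then uses the global decrease $x_n\downarrow x$ (hence $p(x_n-x)\downarrow$ in $E$) together with a cited result \cite[Thm.~3]{SP} to conclude that on any density-one index set $M$ one still has $p(x_{m_n}-x)\downarrow 0$, and from this deduces $x_{m_n}\stupc x$. So the paper leans entirely on the monotone hypothesis to reduce to an ordinary order-convergence fact in $E$, while your route is a direct re-indexing computation showing that $\{i:k_i\in K\}$ has density one whenever $\delta(K)=\delta(A)=1$. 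Your approach is self-contained (no external reference), and, as you essentially observe in your last sentence, it never actually uses monotonicity or Theorem \ref{$up$-sup-inf}: the density-transfer lemma already proves the stronger statement that \emph{any} $st$-$up$-convergent sequence remains $st$-$up$-convergent along every density-one subsequence. The paper's argument, by contrast, really needs $x_n\downarrow$ to pass through $p(x_n-x)\downarrow$, but in return it shows (implicitly) that $p(x_n-x)\downarrow 0$ holds on all of $\mathbb{N}$, so in that monotone case the subsequence restriction $\delta(\{k_i\})=1$ is in fact unnecessary. One small cleanup: to match the paper's definition exactly you should record that the single index set $J\cap J'$ witnesses both $r_i\stpd 0$ and the inequality $p(|x_{k_i}-x|\wedge u)\le r_i$, since the definition asks for one common density-one set.
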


\begin{proof}
	Assume that $x_n\downarrow$ and $x_n\stupc x$ in an $LNRS$ $(X,p,E)$. Then, for any $u\in X_+$, there exists a sequence $q_n\stpd0$ with a set $\delta(K)=1$ such that $p(|x_{k_n}-x|\wedge u)\leq q_{k_n}$ for all $k_n\in K$. Thus, it can be seen that $x_{k_n}\stupc x$ holds. On the other hand, it follows from Theorem \ref{st up implies st po} that $x_n\stpc x$, i.e., $p(x_n-x)\sto 0$. Thus, we have $p(x_n-x)\std 0$ because of the monotonicity of $p$ and $(x_n-x)\downarrow$. Then, for an arbitrary $M=\{m_1,\cdots,m_n,\cdots\}\subseteq\mathbb{N}$ such that $\delta(M)=1$ and $M\neq K$, we have $p(x_{m_n}-x)\downarrow 0$ by applying \cite[Thm.3]{SP}. Hence, $p(x_{m_n}-x)\std 0$, and so, $p(x_{m_n}-x)\sto 0$, i.e., $(x_{m_n}-x)\stpc 0$. Therefore, we get $x_{m_n}\stupc x$.
\end{proof}

Recall that a Dedekind complete Riesz space $X^\delta$ is said to be a Dedekind completion of a Riesz space $X$ whenever $X$ is Riesz isomorphic to a majorizing order dense Riesz subspace of $X^\delta$. It is well known that every Archimedean Riesz space has a Dedekind completion (cf. \cite[Thm.2.24]{ABPO}). A sequence $x_n\oc 0$ in a Riesz space $X$ if and only if $x_n\oc 0$ in $X^\delta$ (cf. \cite[Cor.2.9]{GTX}).
\begin{theorem}\label{$up$-conv by $p$-unit}
	Let $(X,p,E)$ be an $LNRS$ and $e$ be a $p$-unit in $X$. Then we have $x_n\stupc0$ in $X$ if and only if $p(|x_n|\wedge e)\sto 0$ in $E$.
\end{theorem}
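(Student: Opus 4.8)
The plan is to prove the two implications separately, with the forward one immediate and the backward one carrying all the work. Throughout I use the characterization recorded just after the definition of $st$-$up$-convergence: $x_n\stupc 0$ holds exactly when $p(|x_n|\wedge u)\sto 0$ in $E$ for every $u\in X_+$. Since a $p$-unit $e$ is positive, the forward direction is obtained simply by specializing $u=e$.

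For the converse I would first establish a single pointwise domination. Fixing $u\in X_+$ and an integer $m\ge 1$, the $\wedge$-analogue of the estimate $|a\vee b-a\vee c|\le|b-c|$ used in Theorem~\ref{LO are continuous}, applied with $a=|x_n|$, $b=u$, $c=u\wedge me$, gives $|x_n|\wedge u\le |x_n|\wedge(u\wedge me)+(u-u\wedge me)$, while $|x_n|\wedge(u\wedge me)\le|x_n|\wedge me\le m(|x_n|\wedge e)$. Applying the monotonicity and subadditivity of $p$ then yields
\[
p(|x_n|\wedge u)\le m\,p(|x_n|\wedge e)+r_m,\qquad r_m:=p(u-me\wedge u),
\]
for all $m\ge1$ and all $n$. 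The definition of a $p$-unit (with $x=u$) says $r_m\oc0$, and since $me\wedge u\uparrow$ the sequence $(r_m)$ is decreasing, so in fact $r_m\downarrow0$ in $E$.

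Next I would feed in the hypothesis. From $p(|x_n|\wedge e)\sto0$ I obtain a sequence $\hat q_n\std0$ and an index set $K$ with $\delta(K)=1$ such that $p(|x_{k_n}|\wedge e)\le \hat q_{k_n}$ on $K$; after intersecting with the density-one set on which $\hat q$ decreases, I may assume $\hat q_{k_n}\downarrow0$ along $K$. The core step is to collapse the two parameters $m$ and $n$ into a single dominating decreasing sequence, and here I would invoke the Dedekind completion $E^\delta$ recalled just before the statement: in $E^\delta$ the infima $Q_{k_n}:=\inf_{m\ge1}\big(m\,\hat q_{k_n}+r_m\big)$ exist, the family $(Q_{k_n})$ is decreasing because $(\hat q_{k_n})$ is, and $\inf_n Q_{k_n}\le\inf_m\big(m\inf_n\hat q_{k_n}+r_m\big)=\inf_m r_m=0$, so $Q_{k_n}\downarrow0$ in $E^\delta$. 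The pointwise estimate gives $p(|x_{k_n}|\wedge u)\le Q_{k_n}$, hence $p(|x_{k_n}|\wedge u)\oc0$ in $E^\delta$; since $p(|x_{k_n}|\wedge u)\in E$, the transfer principle that $x_n\oc0$ in $E$ iff $x_n\oc0$ in $E^\delta$ returns a genuine decreasing-to-zero dominating sequence in $E$ along $K$. As $\delta(K)=1$, this is precisely $p(|x_n|\wedge u)\sto0$, and since $u\in X_+$ was arbitrary, $x_n\stupc0$ follows.

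The main obstacle is the diagonalization in the third paragraph: the order-theoretic analogue of choosing $m$ large (to kill $r_m$) and then $n$ large (to kill $m\,\hat q_{k_n}$) cannot be run as a metric $\varepsilon$-argument, and a naive choice $m=m(n)$ need not produce a monotone dominating sequence. Passing to $E^\delta$ so that the infimum over $m$ exists, checking that it is decreasing with infimum $0$, and then transporting order convergence back to $E$ is what makes the argument go through; I would double-check that the interchange $\inf_n\inf_m=\inf_m\inf_n$ and the identity $\inf_n(m\,\hat q_{k_n}+r_m)=r_m$ are justified by the positive homogeneity and translation invariance of infima, which hold in any Dedekind complete Riesz space.
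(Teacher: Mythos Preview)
Your proof is correct and follows essentially the same route as the paper: the identical pointwise estimate $p(|x_{k_n}|\wedge u)\le p(u-u\wedge me)+m\,p(|x_{k_n}|\wedge e)$, passage to the Dedekind completion $E^\delta$ to handle the two-parameter limit, and transfer of order convergence back to $E$. The only cosmetic difference is that the paper collapses the double limit by taking $\limsup_{k_n}$ in $E^\delta$ and then letting the multiplier tend to infinity, whereas you build the dominating sequence $Q_{k_n}=\inf_m(m\,\hat q_{k_n}+r_m)$ directly; both devices serve the same purpose and rest on the same ideas.
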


\begin{proof}
	Suppose that $x_n\stupc0$ holds in $X$. Then, by taking $u=e\in E_+$, we get $p(|x_n|\wedge e)\sto 0$ in $E$. 
	
	Now, assume that $p(|x_n|\wedge e)\sto 0$ in $E$. Then there exists a sequence $q_n\stpd0$ with a set $\delta(K)=1$ such that $p(|x_{k_n}|\wedge e)\leq q_{k_n}$ for all $k_n\in K$, i.e., we have $p(|x_{k_n}|\wedge e)\oc 0$ on $K$. Take any $u \in X_+$. Then we observe that
	\begin{eqnarray*}
		p(|x_{k_n}|\wedge u)&\leq& p(|x_{k_n}|\wedge(u-u\wedge ne))+p(|x_{k_n}|\wedge(u\wedge ne))\\&\leq& p(u-u\wedge ne)+np(|x_{k_n}|\wedge e)	
	\end{eqnarray*} 
	holds in $E^\delta$ for each $k_n\in K$ and for every $n\in\mathbb{N}$. So, we have
	$$
	\limsup\limits_{k_n} p(|x_{k_n}|\wedge u)\leq p(u-u\wedge ne)+n\limsup\limits_{k_n} p(|x_{k_n}|\wedge e)
	$$ 
	holds in $E^\delta$ for all $n\in\mathbb{N}$. Since $p(|x_{k_n}|\wedge e)\oc0$ in $E$, we have $p(|x_{k_n}|\wedge e)\oc0$ in $E^\delta$. Thus, we see $\limsup\limits_{k_n}p(|x_{k_n}|\wedge e)=0$ in $E^\delta$. Thus
	$$
	\limsup\limits_{k_n}  p(|x_{k_n}|\wedge u)\leq p(u-u\wedge ne)
	$$
	holds in $E^\delta$ for all $n\in\mathbb{N}$. Since $e$ is a $p$-unit, we have $\limsup\limits_{k_n} p(|x_{k_n}|\wedge u)=0$ in $E^\delta$ or $p(|x_{k_n}|\wedge u)\oc 0$ in $E^\delta$. It follows that $p(|x_{k_n}|\wedge u)\oc 0$ in $E$. Hence, $x_n\stupc0$. 
\end{proof}

\begin{theorem}
	Let $(X,p,E)$ be an $LNRS$. Then define a Riesz norm $p^{\delta}:X^{\delta}\rightarrow E^{\delta}$ by $p^{\delta}(z)=\sup\limits_{0\leq x\leq |z|}p(x)$ for every $z\in X^\delta$. Then we have $x_n\stupc x$ in $(X,p,E)$ if and only if $x_n\stupc x$ in $(X^\delta,p^\delta,E^\delta)$.
\end{theorem}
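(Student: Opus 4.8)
The plan is to reduce everything to the equivalent characterization recorded right after the definition: $x_n\stupc x$ in $(X,p,E)$ holds iff $p(|x_n-x|\wedge u)\sto 0$ in $E$ for every $u\in X_+$, and similarly $x_n\stupc x$ in $(X^\delta,p^\delta,E^\delta)$ holds iff $p^\delta(|x_n-x|\wedge v)\sto 0$ in $E^\delta$ for every $v\in X^\delta_+$. First I would record two bookkeeping facts about $p^\delta$. Since $p$ is monotone, $p^\delta$ agrees with $p$ on $X$, because for $z\in X$ we have $p^\delta(z)=\sup_{0\le x\le|z|}p(x)=p(|z|)=p(z)$; and $p^\delta$ is monotone on $X^\delta$ with its defining supremum existing, since $X$ is majorizing in $X^\delta$ (for $|z|\le w$ with $w\in X_+$, monotonicity bounds the set $\{p(x):0\le x\le|z|\}$ above by $p(w)$, and $E^\delta$ is Dedekind complete). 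These let me compare the two meets: for $u\in X$ the element $|x_n-x|\wedge u$ lies in $X$, so $p^\delta(|x_n-x|\wedge u)=p(|x_n-x|\wedge u)$.

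The heart of the argument is a transfer lemma that I would isolate and prove: for a sequence $(a_n)$ in $E_+$, one has $a_n\sto 0$ in $E$ if and only if $a_n\sto 0$ in $E^\delta$. The forward implication is easy: from a witness $q_n\std 0$ in $E$ on a density-one set $K$ with $a_{k_n}\le q_{k_n}$, the relevant decreasing-to-zero subsequence of $q$ is $\oc 0$ in $E$, hence $\oc 0$ in $E^\delta$ by \cite[Cor.2.9]{GTX}, so it still has infimum $0$ in $E^\delta$; thus $q_n\std 0$ in $E^\delta$ and the same $K$ witnesses $a_n\sto 0$ in $E^\delta$. The converse is the delicate direction and the main obstacle: given $a_n\sto 0$ in $E^\delta$, the dominating sequence a priori lives in $E^\delta\setminus E$, so I must reconstruct a dominating sequence inside $E$. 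I would intersect the two density-one sets to obtain $J$ along which $0\le a_{j_n}\le q_{j_n}\downarrow 0$ in $E^\delta$; this gives $a_{j_n}\oc 0$ in $E^\delta$, and since each $a_{j_n}\in E$, \cite[Cor.2.9]{GTX} yields $a_{j_n}\oc 0$ in $E$. From this $E$-valued order-null statement I extract a decreasing $E$-valued sequence $s_n\downarrow 0$ with $a_{j_n}\le s_n$, set $\hat q_{j_n}:=s_n$ on $J$ (arbitrary off $J$), and read off $\hat q\std 0$ in $E$ witnessing $a_n\sto 0$ in $E$.

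With the lemma in place both implications are short, and each uses the obvious domination principle: if $0\le b_n\le a_n$ in $E^\delta$ and $a_n\sto 0$, then $b_n\sto 0$. For $(\Rightarrow)$, assume $x_n\stupc x$ in $(X,p,E)$ and fix $v\in X^\delta_+$; choose $u\in X_+$ with $v\le u$ by the majorizing property. Then $|x_n-x|\wedge v\le|x_n-x|\wedge u$ in $X^\delta$, and monotonicity of $p^\delta$ together with $p^\delta|_X=p$ gives
$$
0\le p^\delta(|x_n-x|\wedge v)\le p^\delta(|x_n-x|\wedge u)=p(|x_n-x|\wedge u).
$$
The right-hand side is $\sto 0$ in $E$ by hypothesis, hence $\sto 0$ in $E^\delta$ by the lemma, and domination gives $p^\delta(|x_n-x|\wedge v)\sto 0$ in $E^\delta$; as $v$ is arbitrary, $x_n\stupc x$ in $(X^\delta,p^\delta,E^\delta)$.

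For $(\Leftarrow)$, assume $x_n\stupc x$ in $(X^\delta,p^\delta,E^\delta)$ and fix $u\in X_+\subseteq X^\delta_+$. Then $p^\delta(|x_n-x|\wedge u)\sto 0$ in $E^\delta$, and since $|x_n-x|\wedge u\in X$ this sequence equals $p(|x_n-x|\wedge u)$, which takes values in $E_+$; the transfer lemma then yields $p(|x_n-x|\wedge u)\sto 0$ in $E$. As $u$ is arbitrary, $x_n\stupc x$ in $(X,p,E)$, which completes the proof. The only genuinely nontrivial point is the backward half of the transfer lemma, and the tool that resolves it is precisely the cited preservation of order convergence between $X$ and its Dedekind completion.
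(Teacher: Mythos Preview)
Your proposal is correct and follows the same route as the paper: both directions hinge on $X$ being majorizing in $X^\delta$, the identity $p^\delta|_X=p$, and the transfer of order convergence between $E$ and $E^\delta$ via \cite[Cor.~2.9]{GTX}. The one substantive difference is your explicit transfer lemma for $\sto$-convergence: the paper's converse simply asserts that $q_{k_n}\downarrow 0$ in $E^\delta$ implies $q_{k_n}\downarrow 0$ in $E$, which tacitly assumes the dominating sequence already lies in $E$, whereas you correctly flag that the $E^\delta$-valued dominator must be rebuilt inside $E$ by first deducing $a_{j_n}\oc 0$ in $E^\delta$, pulling this back to $E$ via \cite[Cor.~2.9]{GTX}, and then extracting a new $E$-valued decreasing majorant---so your version is in fact more rigorous on precisely the point the paper glosses over.
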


\begin{proof}
	It follows from \cite[Prop.4.1]{AEEM1} that $(X^\delta,p^\delta,E^\delta)$ is an $LNRS$.
	
	Take a sequence $x_n\stupc0$ in $(X,p,E)$ and fix an arbitrary positive element $z$ in $X^\delta_+$. Then, for any positive element $u\in X_+$, there exists a sequence $q_n\stpd0$ in $E$ with a set $\delta(K)=1$ such that $p(|x_{k_n}-x|\wedge u)\leq q_{k_n}$ for every $k_n\in K$. On the other hand, since $X$ is majorizing in its Dedekind completion $X^\delta$, there exists $u\in X$ such that $0<z\leq u$. By using $|x_{k_n}-x|\wedge z\leq |x_{k_n}-x|\wedge u$, we obtain 
	$$
	p^\delta(|x_{k_n}-x|\wedge z)\leq p(|x_{k_n}-x|\wedge u)\leq q_{k_n}
	$$
	for each $k_n\in K$. Thus, $x_n\stupc x$ in $(X^\delta,p^\delta,E^\delta)$ because $q_{k_n}\downarrow0$ in $X$ implies $q_{k_n}\downarrow0$ in $X^\delta$; \cite[Cor.2.9]{GTX}.
	
	For the converse, suppose that $x_n\stupc 0$ in $(X^\delta,p^\delta,E^\delta)$ and $u$ be a positive element in $X_+$. Then there exists a sequence $q_n\stpd0$ in $E^\delta$ with a set $\delta(K)=1$ such that $p^\delta(|x_{k_n}-x|\wedge u)\leq q_{k_n}$ for every $k_n\in K$. Again by applying \cite[Cor.2.9]{GTX}, $q_{k_n}\downarrow0$ in $X^\delta$ implies $q_{k_n}\downarrow0$ in $X$. So, we have 
	$$
	p(|x_{k_n}-x|\wedge u)=p^\delta(|x_{k_n}-x|\wedge u)\leq q_{k_n}
	$$
	for every $k_n\in K$ because of $(x_n)$ in $X$. It follows that $x_n\stupc x$ in $(X,p,E)$.
\end{proof}

\begin{proposition}
	A statistical order convergent sequence in $op$-continuous $LNRS$s is $st$-$up$-convergent.	
\end{proposition}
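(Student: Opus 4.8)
The plan is to run the argument through the characterization recorded immediately after the definition of statistically unbounded $p$-convergence, namely that $x_n\stupc x$ holds in $(X,p,E)$ if and only if $p(|x_n-x|\wedge u)\sto 0$ in $E$ for every $u\in X_+$. So I would fix an arbitrary $u\in X_+$ and aim to exhibit, on a density-one index set, a statistically $p$-decreasing majorant for the $E$-valued sequence $\big(p(|x_n-x|\wedge u)\big)_n$.

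First I would unpack the hypothesis $x_n\sto x$: there is a sequence $(q_n)$ with $q_n\std 0$ in $X$ together with an index set $K$, $\delta(K)=1$, such that $|x_{k_n}-x|\le q_{k_n}$ for all $k_n\in K$. By the definition of $\std 0$, there is a density-one set $M=\{m_1<m_2<\cdots\}$ on which $(q_{m_n})$ is decreasing with $\inf=0$, that is $q_{m_n}\downarrow 0$, and in particular $q_{m_n}\oc 0$ in $X$.

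The key step is to transport this into a statement about $p$. Since $(X,p,E)$ is $op$-continuous, $q_{m_n}\oc 0$ gives $p(q_{m_n})\oc 0$ in $E$. Because $p$ is monotone and each $q_{m_n}$ is positive, $q_{m_{n+1}}\le q_{m_n}$ forces $p(q_{m_{n+1}})\le p(q_{m_n})$, so $\big(p(q_{m_n})\big)$ is decreasing; combined with its order convergence to $0$ this yields $p(q_{m_n})\downarrow 0$ on $M$. Hence $(q_n)$ is statistically $p$-decreasing, $q_n\stpd 0$, equivalently $p(q_n)\std 0$ in $E$. Finally, for the fixed $u$, monotonicity of $p$ applied to $|x_{k_n}-x|\wedge u\le |x_{k_n}-x|\le q_{k_n}$ on $K$ gives $p(|x_{k_n}-x|\wedge u)\le p(q_{k_n})$ for all $k_n\in K$. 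Thus the majorant $p(q_n)\std 0$ together with the density-one set $K$ witnesses $p(|x_n-x|\wedge u)\sto 0$, and since $u$ was arbitrary we conclude $x_n\stupc x$.

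I expect the only genuinely delicate point to be the bridge from the hypothesis, which supplies merely statistical (not honest) order decrease of the dominating sequence, to an application of $op$-continuity: one must first descend to the density-one subset $M$ on which $(q_n)$ is actually order-decreasing to $0$ before $op$-continuity can be invoked, and then recombine the resulting $\stpd$ majorant with the (possibly different) bounding set $K$. Here the definitions of $\sto$ and $\stupc$ are convenient, since they permit the decreasing set and the bounding set to differ; no intersection of density-one sets is needed, so the bookkeeping stays light and everything else reduces to the monotonicity of $p$.
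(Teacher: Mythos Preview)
Your proof is correct and follows essentially the same route as the paper's: dominate $p(|x_n-x|\wedge u)$ by $p(q_n)$ via monotonicity of $p$, then use $op$-continuity to convert the statistical order decrease $q_n\std 0$ in $X$ into $p(q_n)\std 0$ in $E$, which witnesses $p(|x_n-x|\wedge u)\sto 0$. Your bookkeeping with the two density-one sets $K$ and $M$ is in fact more careful than the paper's version, which tacitly identifies the bounding set and the decreasing set as a single index set $I$.
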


\begin{proof} 
	Let $x_n\sto x$ be in an $op$-continuous $LNRS$ $(X,p,E)$. Then there exists a sequence $s_n\std0$ in $X$ with an index set $I$ such that $\delta(I)=1$ and $|x_{i_n}-x|\leq s_{i_n}$ for each $i_n\in I$. Hence, we get $p(x_{i_n}-x)\leq p(s_{i_n})$, and so, $p(|x_{i_n}-x|\wedge u)\leq p(s_{i_n})$ for every $i_n\in I$ and each $u\in X_+$. By using $s_{i_n}\downarrow0$ on $I$ in $X$, it follows from $op$-continuity of $(X,p,E)$ that $p(s_{i_n})\downarrow0$ on $I$ in $E$. Therefore, we get the desired result, $x_n\stupc 0$.
\end{proof}

Recall that a subset $A$ of a Riesz space $E$ is called solid if, for each $x\in A$ and $y\in E$,  $|y|\leq|x|$ implies $y\in A$. Also, a solid vector subspace of a Riesz space is referred to as an ideal. Moreover, an order closed ideal is called a band (cf. \cite{AB,AlTo}).
\begin{proposition}
	Let $B$ be a band in an $LNRS$ $(X,p,E)$. If $b_n\stupc x$ is a sequence in $B$ then $x\in B$.
\end{proposition}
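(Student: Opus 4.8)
The plan is to exploit the standing Archimedean hypothesis, under which every band coincides with its double disjoint complement, so that $B=B^{dd}$, where $B^{d}:=\{y\in X:|y|\wedge|b|=0\text{ for all }b\in B\}$ (cf. \cite{AB}). Consequently, proving $x\in B$ amounts to showing that $x$ is disjoint from every element of $B^{d}$. Thus I would fix an arbitrary $y\in B^{d}$ and aim to verify $|x|\wedge|y|=0$; the definiteness axiom $p(z)=0\Leftrightarrow z=0$ of the lattice norm then reduces this to the single identity $p(|x|\wedge|y|)=0$ in $E$.

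The key step is a pointwise domination. Since each $b_n\in B$ and $y\in B^{d}$, the vectors $b_n$ and $y$ are disjoint, i.e. $|b_n|\wedge|y|=0$ for every $n$. Using the Birkhoff inequality $|a\wedge c-b\wedge c|\leq|a-b|$ (cf. \cite[Thm.1.9]{ABPO}) with $a=|x|$, $b=|b_n|$ and $c=|y|$, together with $\bigl||x|-|b_n|\bigr|\leq|x-b_n|$, I obtain $|x|\wedge|y|\leq|x-b_n|$; combining this with the trivial bound $|x|\wedge|y|\leq|y|$ yields
$$
|x|\wedge|y|\leq |x-b_n|\wedge|y|
$$
for every $n$. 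Applying the monotonicity of $p$ on the nonnegative elements gives $p(|x|\wedge|y|)\leq p(|x-b_n|\wedge|y|)$ for all $n$.

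To finish, I would invoke the characterization that $b_n\stupc x$ means $p(|b_n-x|\wedge u)\sto 0$ in $E$ for every $u\in X_+$; taking $u=|y|$ produces a set $K$ with $\delta(K)=1$ and a statistically order decreasing sequence $q_n\std 0$ with $p(|b_{k_n}-x|\wedge|y|)\leq q_{k_n}$ on $K$. The constant element $v:=p(|x|\wedge|y|)\in E_+$ then satisfies $0\leq v\leq q_{k_n}$ for all $k_n$ in the density-one set on which $q$ decreases to $0$, which forces $v=0$ and hence $|x|\wedge|y|=0$. Since $y\in B^{d}$ was arbitrary, $x\in B^{dd}=B$. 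The point demanding care, and the main technical obstacle, is this last squeeze: I must ensure that, after passing to the intersection of the density-one index sets governing the statistical convergence and the statistical decrease of $q$, the retained subsequence of the dominating sequence still has infimum $0$, so that the constant lower bound $v$ is genuinely pinned to $0$.
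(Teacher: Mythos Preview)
Your argument is correct and follows the same overall strategy as the paper: both proofs use the Archimedean identity $B=B^{dd}$ and reduce the problem to showing $|x|\wedge|y|=0$ for every $y\in B^{d}$. The difference is only in how that disjointness is obtained. The paper invokes Theorem~\ref{LO are continuous} (continuity of the lattice operations under $st$-$up$-convergence) to deduce $|b_n|\wedge|y|\stupc|x|\wedge|y|$, and then uses that the left side is the zero sequence together with uniqueness of limits. You instead unroll that continuity argument by hand: the Birkhoff inequality gives the pointwise bound $|x|\wedge|y|\le|x-b_n|\wedge|y|$ directly, and you then squeeze the constant $v=p(|x|\wedge|y|)$ below a statistically decreasing sequence. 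Your route is slightly more elementary and self-contained; the paper's is shorter because it cites the packaged lemma.

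Regarding the technical point you flag at the end: it is a genuine issue but it resolves cleanly. After intersecting the two density-one index sets you retain an infinite (indeed density-one) set $J=K\cap M$; the restriction $(q_{j_n})_{j_n\in J}$ is then a cofinal subsequence of the decreasing sequence $(q_{m_n})_{m_n\in M}$. Any lower bound of a cofinal subsequence of a decreasing sequence is a lower bound of the whole sequence, so $\inf_{j_n\in J}q_{j_n}=\inf_{m_n\in M}q_{m_n}=0$, and the squeeze $0\le v\le q_{j_n}$ forces $v=0$ as you intended.
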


\begin{proof}
	Assume that $b_n\stupc x$ satisfies for a sequence $(b_n)$ in $B$ and some $x\in X$. By applying Theorem \ref{LO are continuous}, we have $|b_n|\wedge|z| \stupc |x|\wedge|z|$ for any $z\in B^d:=\{z\in X: |z|\wedge|b|=0\ \text{for \ all} \ b\in B\}$. Also, $|b_n|\wedge|z|=0$ for all $n$ because $(b_n)$ is a sequence in $B$. Therefore, we obtain $|x|\wedge|z|=0$. So, $x\in B^{dd}$. As a result, it follows from \cite[Thm.1.39]{ABPO} that we have $B=B^{dd}$, and so, we obtain $x\in B$.
\end{proof}

\begin{proposition}
	Let $B$ be a projection band in an $LNRS$ $(X,p,E)$ and $p_B$ be the corresponding band projection of $B$. Then $x_n\stupc x$ implies $P_B(x_n)\stupc P_B(x)$. 
\end{proposition}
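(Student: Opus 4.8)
The plan is to reduce everything to the two defining features of a band projection: that it is a lattice homomorphism and that it is dominated by the identity operator. Recall that for a projection band $B$ one has the direct sum decomposition $X=B\oplus B^d$, and the associated band projection $P_B$ is positive, linear, idempotent, and satisfies $0\le P_B\le I$; in particular $P_B y\le y$ for every $y\in X_+$, and $P_B$ commutes with the modulus, i.e. $|P_B y|=P_B|y|$ for all $y\in X$. This last identity is the computational heart of the argument and follows from the disjointness of $P_B y$ and $(I-P_B)y$ together with the uniqueness of the band decomposition $|y|=|P_B y|+|(I-P_B)y|$, whose summands lie in $B$ and $B^d$ respectively.

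First I would fix an arbitrary $u\in X_+$ and rewrite the quantity controlling the convergence of $(P_B x_n)$. Using linearity and the modulus identity, $|P_B(x_n)-P_B(x)|=|P_B(x_n-x)|=P_B(|x_n-x|)$. Since $0\le P_B\le I$ gives $P_B(|x_n-x|)\le|x_n-x|$, taking infima with $u$ and applying the monotonicity of the vector norm $p$ yields
$$
p\big(|P_B(x_n)-P_B(x)|\wedge u\big)=p\big(P_B(|x_n-x|)\wedge u\big)\le p\big(|x_n-x|\wedge u\big)
$$
for every $n\in\mathbb{N}$.

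Next I would invoke the hypothesis $x_n\stupc x$ in the equivalent form recorded immediately after the definition, namely $p(|x_n-x|\wedge u)\sto 0$ in $E$. Combined with the displayed domination $0\le p(|P_B(x_n)-P_B(x)|\wedge u)\le p(|x_n-x|\wedge u)$ (both sides automatically lie in $E_+$, as $p$ is $E_+$-valued), a standard squeeze for statistical order convergence forces $p(|P_B(x_n)-P_B(x)|\wedge u)\sto 0$. That squeeze is immediate from the definition: if $0\le a_n\le b_n$ and $b_n\sto 0$, one reuses the same dominating sequence $q_n\std 0$ and index set $K$, noting $|a_{k_n}|=a_{k_n}\le b_{k_n}\le q_{k_n}$ on $K$. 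Since $u\in X_+$ was arbitrary, this is exactly $P_B(x_n)\stupc P_B(x)$.

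I expect the only genuinely delicate point to be justifying the modulus identity $|P_B y|=P_B|y|$ and the domination $P_B\le I$ for band projections; everything downstream is a one-line monotonicity estimate followed by the squeeze lemma. If one prefers not to appeal to $0\le P_B\le I$ directly, the inequality $P_B(|x_n-x|)\le|x_n-x|$ can equally be read off from the band decomposition $|x_n-x|=P_B(|x_n-x|)+(I-P_B)(|x_n-x|)$, whose two summands are positive and disjoint.
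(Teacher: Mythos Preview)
Your proof is correct and follows essentially the same route as the paper: both use that $P_B$ is a lattice homomorphism (so $|P_B(x_n-x)|=P_B|x_n-x|$) together with $0\le P_B\le I$ to obtain $p(|P_B x_n-P_B x|\wedge u)\le p(|x_n-x|\wedge u)$, and then pass the statistical $up$-convergence through this domination. The only cosmetic difference is that you phrase the last step as a squeeze for $\sto$-convergence, whereas the paper unpacks the definition and reuses the same $q_n$ and index set $K$ directly.
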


\begin{proof}
	Let $x_n\stupc x$ be in an $LNRS$ $(X,p,E)$. Then, for any $u\in X_+$, there exists a sequence $q_n\stpd 0$ with a set $\delta(K)=1$ such that $p(|x_{k_n}-x|\wedge u)\leq q_{k_n}$ for all $k_n\in K$. Also, it is well known that a band projection $p_B$ is a lattice homomorphism and it satisfies the inequality $0\leq P_B\leq I$. Hence, we have 
	$$	p(|P_B(x_{k_n})-P_B(x)|\wedge u)=p((P_B|x_{k_n}-x|)\wedge u)\leq p(|x_{k_n}-x|\wedge u)\leq q_{k_n}	$$
	for every $k_n\in K$. Thus, we have $P_B(x_n)\stupc P_B(x)$.
\end{proof}

\begin{theorem}\label{$up$-regular}
	Let $(X,p,E)$ be an $LNRS$ and $Y$ be a sublattice of $X$. If a sequence $(y_n)$ in $Y$ is $st$-$up$-convergent to zero in $Y$ then it is $st$-$up$-convergent to zero in $X$ for each of the following cases:
	\begin{enumerate}
		\item[(i)] $Y$ is majorizing in $X$;
		\item[(ii)] $Y$ is $p$-dense in $X$;
		\item[(iii)] $Y$ is a projection band in $X$.
	\end{enumerate}
\end{theorem}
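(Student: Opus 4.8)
The plan is to reduce all three cases to the pointwise characterization recorded just after the definition of $st$-$up$-convergence: for a sublattice, $y_n\stupc 0$ means precisely that $p(|y_n|\wedge v)\sto 0$ in $E$ for every $v$ in the positive cone of that sublattice. The crucial observation is that the statistically order decreasing majorants produced by $\sto$ live in the fixed space $E$, and that the lattice operation $\wedge$ computed in the sublattice $Y$ coincides with the one in $X$ because $Y$ is a sublattice. Hence the only genuine difference between $st$-$up$-nullity in $Y$ and in $X$ is that in the former the test vectors range over $Y_+$, while in the latter they range over $X_+$. So in each case I must pass from ``$p(|y_n|\wedge v)\sto 0$ for all $v\in Y_+$'' to ``$p(|y_n|\wedge u)\sto 0$ for all $u\in X_+$'', and the work is to control $|y_n|\wedge u$ by wedges against elements of $Y_+$.

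Cases (i) and (iii) are the clean ones. For (i), given $u\in X_+$ I would use that $Y$ is majorizing to pick $v\in Y_+$ with $u\le v$; then $|y_n|\wedge u\le|y_n|\wedge v$ and monotonicity of $p$ give $p(|y_n|\wedge u)\le p(|y_n|\wedge v)$, and since $p(|y_n|\wedge v)\sto 0$ by hypothesis, the dominating sequence $q_n\std 0$ on the density-one set also dominates $p(|y_n|\wedge u)$, so $p(|y_n|\wedge u)\sto 0$. For (iii), write $B=Y$ with band projection $P_B$. Since $y_n\in B$ we have $|y_n|\wedge u\le|y_n|\in B$, hence $|y_n|\wedge u\in B$ by solidity of the band; as $P_B$ is a lattice homomorphism that fixes $B$, this yields
$$
|y_n|\wedge u=P_B(|y_n|\wedge u)=P_B|y_n|\wedge P_Bu=|y_n|\wedge P_Bu,
$$
with $P_Bu\in B_+=Y_+$. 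Applying the hypothesis to $v=P_Bu$ gives $p(|y_n|\wedge u)=p(|y_n|\wedge P_Bu)\sto 0$ directly.

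Case (ii) is where the real difficulty lies. Reading $p$-density as: every $x\in X$ is a $p$-limit of a sequence from $Y$, I would, for $u\in X_+$, choose $(v_m)\subseteq Y$ with $p(u-v_m)\oc 0$ and replace $v_m$ by $v_m^+\in Y_+$, using $|v_m^+-u|\le|v_m-u|$ and monotonicity so that still $p(u-v_m)\oc 0$ with $v_m\in Y_+$. From the standard Riesz-space estimate $\bigl||y_n|\wedge u-|y_n|\wedge v_m\bigr|\le|u-v_m|$ (the $\wedge$-analogue of \cite[Thm.1.9(2)]{ABPO}), together with the triangle inequality and monotonicity of $p$, I obtain
$$
p(|y_n|\wedge u)\le p(|y_n|\wedge v_m)+p(u-v_m)
$$
for all $n,m$, where the hypothesis gives $p(|y_n|\wedge v_m)\sto 0$ in $n$ for each fixed $m$, while $p(u-v_m)\oc 0$ in $m$.

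The main obstacle is converting this iterated limit — statistical in $n$, order in $m$ — into the single statement $p(|y_n|\wedge u)\sto 0$. I would fix a sequence $c_m\downarrow 0$ in $E$ with $p(u-v_m)\le c_m$; for each $m$ the hypothesis furnishes a density-one set $K_m$ on which $p(|y_n|\wedge v_m)$ is dominated by a sequence $\std 0$, so along $K_m$ the quantity $p(|y_n|\wedge u)$ is, outside a density-zero exceptional set, eventually dominated by something below $c_m$. The delicate step is the diagonalization over $m$: choosing indices $N_1<N_2<\cdots$ growing fast enough that the exceptional density-zero sets attached to the finitely many earlier values of $m$ do not accumulate to positive density, one assembles a single density-one index set and a statistically order decreasing majorant for $p(|y_n|\wedge u)$ built from the $c_m$, which is exactly $p(|y_n|\wedge u)\sto 0$. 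This interlacing argument — the standard device in statistical convergence proofs, here carried out at the level of the order structure of $E$ — is the point I expect to require the most care to write out rigorously.
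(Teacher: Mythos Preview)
Your argument matches the paper's in all three cases: for (i) you both dominate $u$ by some $v\in Y_+$; for (iii) the paper writes the band decomposition $u=u_1+u_2\in Y\oplus Y^{\perp}$ and uses $|y_n|\wedge u_2=0$, which is exactly your identity $|y_n|\wedge u=|y_n|\wedge P_Bu$; and for (ii) both arguments rest on the same estimate $p(|y_n|\wedge u)\le p(u-y)+p(|y_n|\wedge|y|)$ with $y\in Y$ a $p$-approximant of $u$.

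The one noteworthy difference is in case (ii). Where you correctly isolate the passage from the two-parameter bound to $p(|y_n|\wedge u)\sto 0$ as the delicate step and sketch a diagonalization, the paper does not carry this out at all: it simply writes ``$p(|y_n|\wedge u)\sto 0$ because $0\ne w\in p(X)$ is arbitrary and $p(|y_n|\wedge|y|)\sto 0$''. So your worry is well placed, and your proposal is already more detailed at that point than the published proof; the interlacing construction you outline is precisely the missing content behind the paper's one-line assertion.
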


\begin{proof}
	Assume that $(y_n)$	is a sequence in $Y$ such that $y_n\stupc 0$ in $Y$ and $u$ be a positive element in $X_+$.
	
	$(i)$ Since $Y$ is majorizing in $X$, there exists $y\in Y$ such that $u\leq y$. It follows from
	$$
	0\leq p(|y_n|\wedge u)\leq p(|y_n|\wedge y)\sto 0,
	$$
	that $p(|y_n|\wedge u)\sto 0$, i.e., $y_n\stupc 0$ in $X$.
	
	$(ii)$ Take any $0\ne w\in p(X)$. Thus, there is $y\in Y$ with $p(u-y)\le w$. So, it follows that
	$$
	p(|y_n|\wedge u)\le p(|y_n|\wedge|u-y|)+p(|y_n|\wedge |y|)\leq w+p(|y_n|\wedge |y|).
	$$
	Then $p(|y_n|\wedge u)\sto 0$ because $0\ne w\in p(X)$ is arbitrary and $p(|y_n|\wedge |y|)\sto0$. So, we get $y_n\stupc0$ in $X$.
	
	$(iii)$ It is clear that $Y=Y^{\bot\bot}$ implies $X=Y\oplus Y^{\bot}$. Thus, $u=u_1+u_2$ with $u_1\in Y$ and $u_2\in Y^{\bot}$. It follows from  $y_n\wedge u_2=0$ and \cite[Lem.1.4]{ABPO} that we have 
	$$
	p(|y_n|\wedge u)=p(|y_n|\wedge(u_1+u_2))\leq p(|y_n|\wedge u_1)\sto0.
	$$ 
	As a result, we get $y_n\stupc0$ in $X$.
\end{proof}


\end{document}